\documentclass[a4paper,12pt]{article}
 \setlength{\parskip}{0pc}
 \setlength{\textwidth}{38pc}
 \setlength{\textheight}{56pc} 
 \setlength{\topmargin}{-1.2cm}
 \setlength\oddsidemargin{0cm}  
 \setlength\evensidemargin{0cm} 

\usepackage{setspace}

\usepackage{latexsym} 

\usepackage[utf8]{inputenc}
\usepackage[english]{babel}

\usepackage{amsfonts,amsmath,amssymb,amsbsy,amsthm,enumerate}

\usepackage[mathscr]{euscript}
\usepackage{pstricks}
\usepackage{multirow}
\usepackage{verbatim}
\usepackage{color}
\usepackage{lineno}
\usepackage{hyperref}
\usepackage{subcaption}

\onehalfspacing

\sloppy

\newtheorem{theorem}{Theorem}

\newtheorem{lemma}{Lemma}
\newtheorem{corollary}[theorem]{Corollary}

\usepackage{tikz}
\usepackage[subrefformat=parens,labelformat=parens]{subcaption}
\usetikzlibrary{calc,positioning,decorations.pathmorphing,decorations.pathreplacing}
\tikzset{black vertex/.style={circle,draw,minimum size=1mm,inner sep=0pt,outer sep=2pt,fill=black, color=black}}
\tikzset{largepath/.style={orange!75!white,line width=6pt,line cap=round,opacity=0.5}}

\usepackage{graphicx}

\title{Ramsey Goodness of paths and unbalanced graphs\footnote{
This research has been partially supported by Coordena\c cão de Aperfei\c coamento
de Pessoal de N\'\i vel Superior -- Brasil -- CAPES -- Finance Code 001.
F.~Botler is supported by CNPq {(\small 304315/2022-2)} and
CAPES {(\small 88887.878880/2023-00)}.
L. Moreira is supported by FAPESB {(\small APP0044/2023)}.
CNPq is the National Council for Scientific and Technological Development of Brazil.
FAPESB is the Bahia Research Foundation.\\
E-mail addresses: fbotler@ime.usp.br (F. Botler), lpfm@dmat.ufpe.br (L. Moreira), jpsouza@cos.ufrj.br (J. Souza)
}}
\author{Fábio Botler$^1$ \and Luiz Moreira$^2$ \and João Pedro de Souza$^{3,4}$}
\date{
$^1$Universidade de São Paulo\\%
$^2$Universidade Federal de Pernambuco\\%
$^3$Universidade Federal do Rio de Janeiro\\%
$^4$Colégio Pedro II
}

\begin{document}
\maketitle

\begin{abstract}
    Given graphs \(G\) and \(H\),
    we say that \(G\) is \emph{\(H\)-good} if the Ramsey number \(R(G,H)\)
    equals the trivial lower bound \((|G| - 1)(\chi(H) - 1) + \sigma(H)\),
    where \(\chi(H)\) denotes the usual chromatic number of \(H\),
    and \(\sigma(H)\) denotes the minimum size of a color class in a \(\chi(H)\)-coloring of \(H\).
    Pokrovskiy and Sudakov [Ramsey goodness of paths. Journal of Combinatorial Theory, Series B, 122:384–390, 2017.] proved that \(P_n\) is \(H\)-good whenever \(n\geq 4|H|\).
    In this paper, given \(\varepsilon>0\), we show that if \(H\) satisfy a special unbalance condition,
    then \(P_n\) is \(H\)-good whenever \(n \geq (2 + \varepsilon)|H|\).
    More specifically, we show that if \(m_1,\ldots, m_k\) are such that \(\varepsilon\cdot m_i \geq 2m_{i-1}^2\) for \(2\leq i\leq k\),
    and \(n \geq (2 + \varepsilon)(m_1 + \cdots + m_k)\), 
    then \(P_n\) is \(K_{m_1,\ldots,m_k}\)-good.
\end{abstract}

In this paper, we consider only finite and undirected graphs without loops or multiple edges.
Throughout this text, given a graph \(G\), 
we denote by \(|G|\) the number of vertices of~\(G\).
Given graphs \(K\), \(G\) and \(H\), we write \(K \rightarrow (G,H)\) if every red--blue coloring of the edges of \(K\) contains a red copy of \(G\) or a blue copy of \(H\); and the \emph{Ramsey number} \(R(G,H)\) is the minimum positive integer \(N\) for which \(K_N \rightarrow (G,H)\).
Understanding the behavior of \(R(G,H)\) is one of the main problems in Extremal Combinatorics, 
and have been extensively explored over the last century~\cite{conlon2015recent} while significant results were obtained recently~\cite{campos2023exponential,conlon2018ramsey,fiz2020triangle,mattheus2024asymptotics,sah2023diagonal}.

A natural lower bound for \(R(G,H)\) is as follows.
Let \(\chi(H)\) be the chromatic number of~\(H\), i.e., the smallest number of colors for which there is a proper coloring of the vertices of \(H\),
and let \(\sigma(H)\) be the minimum size of a smallest class in a minimum proper coloring of~\(H\),
i.e., \(\sigma(H) =\min \{|c^{-1}(i)| : c \text{ is a proper coloring of } H \text{ with } \chi(H) \text{ colors}, i\in\big[\chi(H)\big]\}\).
Burr~\cite{burr1981ramsey} observed that if \(G\) is a connected graph and \(|G|\geq \sigma(H)\), then we have
\begin{equation}\label{lower-bound:trivial}
    R(G,H) \geq (|G|-1)(\chi(H)-1)+\sigma(H).
\end{equation}
Indeed, put \(N=(|G|-1)(\chi(H)-1)+\sigma(H)-1\),
and consider the red--blue coloring of \(E(K_N)\) obtained from \(\chi(H)-1\) disjoint red cliques with \(|G|-1\) vertices and one red clique with \(\sigma(H)-1\) vertices by coloring the remaining edges blue.
Such a coloring contains no red copy of \(G\) because each red component has size at most \(|G|-1\);
and contains no blue copy of \(H\) because 
the blue edges induce a \(\chi(H)\)-partite graph \(K^B\),
but  different parts of \(H\) must fit in different parts of \(K^B\),
while no part of \(H\) fits in the part of size \(\sigma(H)-1\).

Motivated by this construction, in a seminal paper, Erd\H{o}s and Burr~\cite{burr1983generalizations} introduced the concept of \emph{goodness}.
More specifically, we say that a graph $G$ is \emph{\(H\)-good} if~\eqref{lower-bound:trivial} holds with equality, i.e., whenever \(R(G,H)=(|G|-1)\big(\chi(H)-1\big)+\sigma(H)\).
Erd\H{o}s and Burr~\cite{burr1983generalizations} 
presented several examples of $K_r$-good sparse graphs, 
and posed a number of questions that shaped this subarea since then.
The main such question is whether bounded degree are graphs $K_r$-good.

Although Brandt disproved the main conjecture~\cite{brandt2006global}, goodness of sparse graphs have been extensively studied.
Nikiforov and Rousseau~\cite{NiRo2009} answered all of other questions,
and, in particular, their results together with the Separator Theorem of Alon, Seymour, and Thomas~\cite{AST1990} imply that all planar graphs are $K_r$-good.
More recently, Allen, Brightwell, and Skokan~\cite{ABS}, among other interesting results, 
proved that bounded degree graphs with sublinear bandwidth are \(K_r\)-good. 

In a more general setup,
\(H\)-goodness is known for some specific classes of sparse graphs and any fixed graph \(H\), not necessarily complete.
Erd{\H{o}}s, Faudree, Rousseau and Schelp~\cite{EFRS} proved that every bounded degree tree with $n$ vertices is $H$-good for every fixed $H$ and sufficiently large $n \in \mathbb{N}$, and their result was strengthened first by Balla, Pokrovskiy and
Sudakov~\cite{BPS2018}, and recently by Montgomery, Pavez-Signé, and Yan~\cite{montgomery2023ramsey} who proved that if $n = \Omega(|H|)$ then every bounded degree tree with \(n\) vertices is $H$-good.
In the special case of paths, in 2017, Pokrovskiy and Sudakov~\cite{pokrovskiy2017ramsey} presented a much tighter result that \(P_n\) is \(H\)-good whenever \(n\geq 4|H|\), where \(P_n\) denotes the path with \(n\) vertices.
Observe that it suffices to verify the case \(H\) is a complete multipartite graph.

\begin{theorem}[Pokrovskiy--Sudakov, 2017]\label{thm:pok.1}
Given integers \(m_1\leq m_2 \leq \cdots \leq m_k\), \(n\geq 3m_k+5m_{k-1}\) and \(N\geq(n-1)(k-1)+m_1\), we have \(K_N\rightarrow(P_n,K_{m_1,\ldots,m_k})\).
\end{theorem}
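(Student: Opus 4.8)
The plan is to argue by induction on \(k\), peeling off the largest part \(m_k\) at each step. The base case \(k=1\) is immediate, since \(K_{m_1}\) is edgeless and any \(m_1\le N\) vertices yield a (vacuously blue) copy. For the inductive step I fix a red--blue coloring of \(K_N\) with \(N\ge (n-1)(k-1)+m_1\) and assume there is no red \(P_n\); I then aim to locate a set \(S\) of \(m_k\) vertices whose common blue neighborhood \(T\) satisfies \(|T|\ge (n-1)(k-2)+m_1\). Granting this, the chain \(n\ge 3m_k+5m_{k-1}\ge 3m_{k-1}+5m_{k-2}\) shows that the inductive hypothesis applies to the coloring restricted to \(T\) (which still contains no red \(P_n\)), producing a blue \(K_{m_1,\ldots,m_{k-1}}\) with all parts inside \(T\); since every vertex of \(S\) is blue-adjacent to all of \(T\), adjoining \(S\) as a new part of size \(m_k\) completes a blue \(K_{m_1,\ldots,m_k}\). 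Note that dropping the \emph{largest} part is what keeps \(\sigma = m_1\) unchanged, so the trivial bound of the sub-instance is exactly \((n-1)(k-2)+m_1\).

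Everything therefore reduces to producing the pair \((S,T)\), and here I would start from a longest red path \(P=p_1\cdots p_\ell\), which has \(\ell\le n-1\) vertices by hypothesis, so that \(U:=V(K_N)\setminus V(P)\) satisfies \(|U|\ge N-(n-1)\ge (n-1)(k-2)+m_1\). The maximality of \(P\) forces both endpoints \(p_1,p_\ell\) to be blue-complete to \(U\), since a red edge from \(U\) to an endpoint would extend \(P\); similar insertion and rerouting arguments show that no vertex of \(U\) is red-adjacent to two consecutive vertices of \(P\), and that a red edge inside \(U\) cannot bridge a gap on \(P\). These constraints pin down the red graph between \(P\) and \(U\) tightly. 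To enlarge the two-element blue-universal set \(\{p_1,p_\ell\}\) up to size \(m_k\), I would apply P\'osa rotations at the ends of \(P\): every endpoint reachable by rotation is again the endpoint of a longest path on the same vertex set \(V(P)\), hence is itself blue-complete to \(U\). Thus the rotation-endpoint set \(E\) is blue-complete to \(U\), and in the clean case I would simply take \(S\subseteq E\) with \(|S|=m_k\) and \(T=U\).

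The main obstacle is guaranteeing that \(|E|\ge m_k\), i.e.\ that enough rotation-endpoints exist. When rotations stall, the standard P\'osa analysis produces a small set of vertices on \(P\) whose removal disconnects the reachable portion, and I would translate this into either a direct extension of \(P\) (a contradiction) or into an alternative family of \(m_k\) vertices with a large common blue neighborhood, exploiting the density forced on the red graph by Erd\H{o}s--Gallai (if too many vertices have red degree at least \(n-1\), the red graph already contains a red \(P_n\)). Balancing the number of endpoints one can rotate out against the number of path vertices that must be reserved to protect those rotations is where I expect the hypothesis \(n\ge 3m_k+5m_{k-1}\) to be used: the term \(3m_k\) provides room to extract \(m_k\) endpoints, while \(5m_{k-1}\) absorbs the second-order losses coming from the interaction between \(U\) and \(P\) and from the recursive gap condition. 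I anticipate that this counting, rather than the structural setup, will be the delicate heart of the argument.
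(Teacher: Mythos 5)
First, a point of reference: the paper you were given does not prove Theorem~\ref{thm:pok.1} at all --- it is quoted from Pokrovskiy and Sudakov~\cite{pokrovskiy2017ramsey} --- so your attempt has to be judged on its own merits and against that external proof. Your skeleton is the right one: induction on \(k\), peeling off the largest part \(m_k\) (which correctly keeps \(\sigma=m_1\) for the sub-instance), with the monotonicity chain \(n\geq 3m_k+5m_{k-1}\geq 3m_{k-1}+5m_{k-2}\) justifying the inductive call. Your ``clean case'' is also correct: all P\'osa rotation endpoints of a longest red path \(P\) are blue-complete to \(U=V\setminus V(P)\), and if there are at least \(m_k\) of them you may take \(S\subseteq E\), \(T=U\), since \(|U|\geq N-(n-1)\geq (n-1)(k-2)+m_1\).

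The genuine gap is the stalled case \(|E|<m_k\), which you explicitly defer, and it is not a deferrable counting detail --- it is the entire content of the theorem. A telling symptom: your argument up to that point never uses the hypothesis \(n\geq 3m_k+5m_{k-1}\). When rotations stall (e.g.\ the red graph induced on \(V(P)\) is a chordless path, so \(|E|=1\)), P\'osa's lemma gives \(|N_{\mathrm{red}}(E)|\leq 2|E|\) with \(N_{\mathrm{red}}(E)\subseteq V(P)\), and the only way to accumulate more blue-universal vertices is to delete \(E\cup N(E)\) and repeat with a longest path of the remaining graph (this is exactly the iteration in Section~\ref{sec:general} of the present paper, built on Lemma~\ref{lemma:brandt}). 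Each harvested vertex costs up to three deleted vertices, and the final longest path may still occupy up to \(n-1\) vertices, so the common blue neighborhood you end up with has size only about \(N-3(m_k-1)-(n-1)\geq (n-1)(k-2)+m_1-3(m_k-1)\): short of what your inductive call needs by \(3(m_k-1)\). Closing that deficit forces a case split on the length of the longest path: if it is shorter than roughly \(n-3m_k\) the iteration closes, but if it is longer one must build the blue \(K_{m_k,\,\cdot}\) using vertices \emph{on the path}, via predecessor-set arguments of the kind formalized in Lemmas~\ref{lemma:blue-clique} and~\ref{lemma:abab} (components \(F\) of \(G\setminus V(P)\), the sets \(N(F)^-\), and the bound \(|N(F)^-\cap N(F')|\leq 1\)); this long-path case is precisely where \(n\geq 3m_k+5m_{k-1}\) gets consumed. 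Your proposed fallback --- Erd\H{o}s--Gallai density --- cannot substitute for it: an upper bound on the number of red edges, or on the number of vertices of large red degree, is a global edge count and yields no set of \(m_k\) vertices sharing a blue neighborhood of prescribed size, which is a structural conclusion. So the proposal is a correct reduction with a correct easy case, but the step that makes the theorem true is missing.
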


Pokrovskiy and Sudakov~\cite{pokrovskiy2017ramsey}
also observe that the multiplicative constant (\(4\)) cannot be reduced below \(2\),
giving the following example which we present here for completeness.
Fix \(H = K_{m_1,m_2}\) with \(m_1 \leq m_2\),
and suppose \(n = 2m_2 - 2< 2(m_1 + m_2) = 2|H|\).
Then let \(N = n-1 + m_1\) and color \(K_N\)
with disjoint (curiously) blue cliques of size \(m_1 + m_2 - 1\) and \(m_2 - 2\),
joined by red edges.
Such a coloring has no blue copy of \(H\) because each blue component has size at most \(|H| - 1\);
and has no red copy of \(P_n\) because any red path must alternate between the two blue cliques, and hence has size at most \(2m_2 - 3\).

More recently, Pokrovskiy and Sudakov~\cite{pokrovskiy2020ramsey}
obtained a result on Ramsey goodness of cycles 
by imposing additional conditions on the sizes the color classes of \(H\).

\begin{theorem}[Pokrovskiy--Sudakov, 2020]\label{thm:pok.2}
If \(n\geq10^{60}m_k\), \(m_1\leq m_2\leq \cdots\leq m_k\) satisfying \(m_i\geq i^{22}\), for each \(i\), and \(N\geq (n-1)(k-1)+m_1\), then \(K_N\rightarrow(C_n,K_{m_1,\ldots,m_k})\).
\end{theorem}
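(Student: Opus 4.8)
The plan is to prove the equivalent upper bound: fixing $N=(n-1)(k-1)+m_1$ (the case of larger $N$ follows by monotonicity), I take a red--blue colouring of $E(K_N)$ with no blue $K_{m_1,\dots,m_k}$ and produce a red $C_n$. Write $R$ and $B$ for the red and blue graphs. The one structural fact I would use throughout is that every edge between two distinct components of $R$ is blue; hence any family of pairwise distinct red components $C_1,\dots,C_k$ admitting a bijection $\pi$ with $|C_i|\ge m_{\pi(i)}$ yields a blue $K_{m_1,\dots,m_k}$, by taking $m_{\pi(i)}$ vertices inside $C_i$ as the $i$-th class (within-class edges are irrelevant and all between-class edges are blue).

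First I would show that the largest red component $D$ satisfies $|D|\ge n$. Otherwise every red component has at most $n-1$ vertices, and since $N>(n-1)(k-1)$ there are at least $k$ of them; using $n-1\ge m_k$ and $N\gg k\,m_k$ (both consequences of $n\ge 10^{60}m_k$), a routine greedy packing --- assigning a sufficiently large component to a class by itself and accumulating the smaller components otherwise --- groups the components into $k$ blobs of sizes at least $m_1,\dots,m_k$, producing a blue $K_{m_1,\dots,m_k}$, a contradiction. Since a red $C_n$ must lie inside a single red component, the problem reduces to finding a red $C_n$ inside $D$.

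The heart of the argument is to turn every obstruction to a red $C_n$ in $D$ into blue density. If $D$ has a sparse red cut, that is a set $X$ with few red edges to a large part of $D\setminus X$, then almost all edges across the cut are blue, yielding a large blue complete bipartite graph $K_{a,b}$; recursing on one side to assemble $k-1$ blue classes and using the other side as the last class gives a blue $K_{m_1,\dots,m_k}$. This recursion is exactly where the unbalance hypothesis $m_i\ge i^{22}$ is needed, since it guarantees that the class sizes one can still extract after each of the $k$ levels stay above the required thresholds. I may therefore assume $D$ is a red expander on at least $n$ vertices; Theorem~\ref{thm:pok.1} already certifies that such structures are long, as $n\ge 10^{60}m_k\ge 3m_k+5m_{k-1}$ gives a red $P_n$, and Pósa rotation--extension closes a long red path into a red cycle of length at least $n$.

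The main obstacle, and the point where both quantitative hypotheses really bite, is passing from a red cycle of length \emph{at least} $n$ to one of length \emph{exactly} $n$: unlike the path case one cannot truncate, since a long cycle need not contain shorter ones. The plan is to shorten a red cycle $C$ of length $L\ge n$ one vertex at a time, each step using a red chord that bypasses a single vertex of $C$; the linear red expansion of $D$ forces such chords to exist across essentially every window of $C$, and when they fail one instead harvests the blue edges and falls back to the multipartite-building step above. Sustaining enough expansion through all $L-n$ shortening steps is what the large constant $10^{60}$ pays for, while keeping the $k$ blue classes under simultaneous control in the complementary case is what the growth condition $m_i\ge i^{22}$ manages; these two pieces of bookkeeping are where the genuine difficulty lies.
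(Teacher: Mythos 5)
You should first note a mismatch of scope: the paper never proves this statement. Theorem~\ref{thm:pok.2} is quoted from Pokrovskiy--Sudakov~\cite{pokrovskiy2020ramsey} purely as background, so there is no in-paper proof to compare against; what you have attempted is a sketch of the main theorem of a separate, long and technically demanding paper. Judged on its own merits, your sketch reproduces the easy outer shell correctly (reducing to a single red component via the ``all cross-component edges are blue'' observation and a greedy packing of components into classes --- this packing does work, though it needs the case analysis of how many classes are covered by single large components, which your one-line description glosses over), but it collapses exactly at the step you yourself identify as the heart of the matter.

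The fatal gap is the passage from a red cycle of length at least $n$ to one of length exactly $n$. Your plan --- shorten the cycle one vertex at a time using red chords that bypass a single vertex --- cannot be repaired by any amount of expansion bookkeeping: a chord bypassing one vertex of a cycle creates a triangle, so if the red graph is triangle-free (for instance bipartite) no such chord exists anywhere, yet bipartite graphs can be excellent expanders. Worse, if the red component is bipartite and $n$ is odd, there is \emph{no} red $C_n$ at all, so any correct argument must in that case manufacture the blue $K_{m_1,\ldots,m_k}$ instead; your fallback (``when they fail one instead harvests the blue edges'') is precisely the hard bipartite-like/parity case that occupies most of the actual Pokrovskiy--Sudakov proof, and you give no mechanism for it. Relatedly, the claim that Pósa rotation--extension closes a long path into a cycle of length \emph{at least} $n$, and that expansion survives the recursive cut-removal, are asserted rather than proved. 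So the proposal is not a proof, nor an outline of one that could be completed by routine work: the genuinely difficult content of the theorem (exact cycle length, and the bipartite/parity obstruction) is exactly what is missing.
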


When we consider a simple unbalance condition in the size of the largest parts,
we obtain the following straightforward consequence of Theorem~\ref{thm:pok.1}
which brings the constant \(4\) closer to \(3\).

\begin{corollary}\label{cor:pok.1}
Let \(\varepsilon \in (0,1]\).
Given integers \(m_1\leq m_2 \leq \cdots \leq m_k\), such that \(\varepsilon \cdot m_k \geq 2m_{k-1}\) and let \(n\geq (3+ \varepsilon)(m_1+ \cdots + m_k)\) and \(N\geq(n-1)(k-1)+m_1\). 
Then \(K_N\rightarrow(P_n,K_{m_1,\ldots,m_k})\).
\end{corollary}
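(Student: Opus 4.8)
The plan is to reduce the statement directly to Theorem~\ref{thm:pok.1}, of which the corollary is advertised as a straightforward consequence. The condition on $N$, namely $N \geq (n-1)(k-1) + m_1$, appears verbatim in both statements, and the integers $m_1 \leq \cdots \leq m_k$ are the same. Hence the only thing that genuinely requires checking is that the corollary's hypotheses force the path-length bound $n \geq 3m_k + 5m_{k-1}$ demanded by Theorem~\ref{thm:pok.1}. Once this single inequality is in hand, Theorem~\ref{thm:pok.1} applies unchanged and delivers $K_N \rightarrow (P_n, K_{m_1,\ldots,m_k})$ at once.

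To verify $n \geq 3m_k + 5m_{k-1}$, I would start from the hypothesis $n \geq (3+\varepsilon)(m_1 + \cdots + m_k)$ and retain only the two largest summands, using $m_1 + \cdots + m_k \geq m_{k-1} + m_k$ (valid since $k \geq 2$ and each $m_i$ is a positive integer, so the discarded terms are nonnegative). This yields $n \geq (3+\varepsilon)(m_{k-1} + m_k)$. Expanding the product and dropping the nonnegative term $\varepsilon m_{k-1}$ then gives $n \geq 3m_{k-1} + 3m_k + \varepsilon m_k$.

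The unbalance hypothesis enters only at the last step: substituting $\varepsilon m_k \geq 2m_{k-1}$ produces $n \geq 3m_{k-1} + 3m_k + 2m_{k-1} = 3m_k + 5m_{k-1}$, precisely the bound required. There is no real obstacle here; the only point demanding care is the bookkeeping. One must keep the \emph{extra} $3m_{k-1}$ arising from the sum $m_1 + \cdots + m_k$ in addition to the $2m_{k-1}$ supplied by $\varepsilon m_k \geq 2m_{k-1}$, since using the unbalance condition alone would yield only the weaker $n \geq 3m_k + 2m_{k-1}$. I would also flag the implicit assumption $k \geq 2$, so that $m_{k-1}$ is well defined in both the hypothesis $\varepsilon m_k \geq 2m_{k-1}$ and the target inequality; note that the restriction $\varepsilon \leq 1$ is not needed for this direction and merely reflects the regime (bringing the constant toward $3$) in which the corollary is of interest.
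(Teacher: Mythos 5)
Your proof is correct and follows essentially the same route as the paper: both reduce to Theorem~\ref{thm:pok.1} by discarding all but the two largest parts, expanding \((3+\varepsilon)(m_{k-1}+m_k)\), and invoking \(\varepsilon\cdot m_k \geq 2m_{k-1}\) to reach \(n \geq 3m_k + 5m_{k-1}\). Your extra remarks (the implicit \(k\geq 2\), and that \(\varepsilon\leq 1\) is not actually used) are accurate but do not change the argument.
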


\begin{proof}
    Since \(\varepsilon\cdot m_k \geq 2m_{k-1}\), we have
\[
    n \geq (3+\varepsilon)(m_k+m_{k-1}) 
                \geq 3m_k+\varepsilon\cdot m_k +3m_{k-1} 
                 \geq 3m_k+5m_{k-1}.
\]
Therefore, by Theorem~\ref{thm:pok.1}, \(K_N\rightarrow(P_n,K_{m_1,\ldots,m_k})\) as desired.
\end{proof}

In this paper, we reduce the constant \(3+\varepsilon\) to \(2+\varepsilon\)
by imposing a stronger unbalance condition on \(H\).
Given the example presented above, this result is somehow tight.

\begin{theorem}\label{thm:main}
Let \(\varepsilon \in (0,1]\)
and let \(m_1\leq m_2 \leq \cdots \leq m_k\) be  positive integers such that \(\varepsilon\cdot m_i\geq 2m_{i-1}^2\) for every \(2\leq i\leq k\).
If \(n\geq (2+ \varepsilon)(m_1+ \cdots + m_k)\) and \(N\geq (n-1)(k-1)+m_1\),
then \(K_N\rightarrow(P_n,K_{m_1,\ldots,m_k})\).
\end{theorem}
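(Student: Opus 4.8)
The plan is to assume that the given red--blue colouring of \(K_N\) has no red copy of \(P_n\) and to build a blue \(K_{m_1,\ldots,m_k}\) by a greedy peeling that produces the colour classes in decreasing order of size. Write \(R\) for the red graph and, for \(W\subseteq V(K_N)\) and \(A\subseteq W\), let \(N_R(A)\cap W\) denote the red neighbourhood of \(A\) inside \(W\). I would maintain a candidate set \(C_i\) with the invariant that \(|C_i|\ge (n-1)(i-1)+m_1\) and that \(C_i\) is \emph{blue-complete} to \(V_{i+1}\cup\cdots\cup V_k\) (every edge between the two sides is blue), starting from \(C_k=V(K_N)\), whose size is at least \((n-1)(k-1)+m_1\) by hypothesis. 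At step \(i\) (run for \(i=k,k-1,\ldots,2\)) I select the class \(V_i\subseteq C_i\) of size \(m_i\) so that
\[
  \bigl|\,V_i\cup\bigl(N_R(V_i)\cap C_i\bigr)\,\bigr|\le n-1 ,
\]
and then set \(C_{i-1}=C_i\setminus\bigl(V_i\cup N_R(V_i)\bigr)\). By construction \(C_{i-1}\) has no red edge to \(V_i\), so it is blue-complete to \(V_i\), hence to \(V_{i+1}\cup\cdots\cup V_k\) as well, and \(|C_{i-1}|\ge |C_i|-(n-1)\ge (n-1)(i-2)+m_1\), so the invariant is preserved. After the last step \(|C_1|\ge m_1\), and taking \(V_1\) to be any \(m_1\) vertices of \(C_1\) yields \(k\) pairwise blue-complete classes \(V_1,\ldots,V_k\), that is, a blue \(K_{m_1,\ldots,m_k}\).

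Thus everything reduces to the following selection statement, applied to the red graph induced on \(C_i\): \emph{if \(G\) is a graph on a vertex set \(C\) with no \(P_n\), then there is a set \(A\subseteq C\) with \(|A|=m_i\) and \(|A\cup N_G(A)|\le n-1\).} To find such an \(A\) I would take a longest path \(P=v_1\cdots v_p\) in \(G\) (so \(p\le n-1\)) and apply Pósa rotations with the endpoint \(v_1\) fixed. Let \(S\) be the set of all vertices that occur as the free endpoint of a longest path obtained from \(P\) by rotations. Every vertex of \(S\) is an endpoint of a longest path on the \emph{same} vertex set \(V(P)\), and such an endpoint has no neighbour outside \(V(P)\) (a neighbour outside would extend the path); hence \(N_G(S)\subseteq V(P)\). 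Consequently, if \(|S|\ge m_i\) I may take any \(A\subseteq S\) with \(|A|=m_i\) and obtain \(A\cup N_G(A)\subseteq V(P)\), giving \(|A\cup N_G(A)|\le p\le n-1\) directly, with no slack required. This is the main case, and it already covers the extremal configurations such as disjoint red cliques, where the rotation-endpoint set is an entire clique.

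The hard part will be the case \(|S|<m_i\), in which the longest path has a \emph{rigid} end. Here Pósa's lemma gives \(|N_G(S)|<2|S|<2m_i\), so \(S\) together with its neighbourhood is a small set confined to \(P\); the natural move is to enlarge \(A\) beyond \(S\) using the structure around this rigid end, for instance via the DFS decomposition of \(G\) (in which each depth class is an independent set of \(G\) contained in a short root path) or via a blue clique extracted from the small neighbourhood \(N_G(S)\). Any such enlargement, however, forces the closed red neighbourhood of \(A\) to exceed the ideal bound by an additive error, and the crux of the argument is to show that this error is only of order \(m_{i-1}^2\), coming from the size of the previously controlled class together with a Ramsey-type extraction inside a set of size \(O(m_{i-1})\). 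This is precisely where the unbalance hypothesis is spent: since \(n\ge(2+\varepsilon)(m_1+\cdots+m_k)\ge(2+\varepsilon)m_i\), the room beyond \(2m_i\) is at least \(\varepsilon m_i-1\), while \(\varepsilon m_i\ge 2m_{i-1}^2\) guarantees that the error \(m_{i-1}^2\le \tfrac12\varepsilon m_i\) fits inside this room, keeping \(|A\cup N_G(A)|\le 2m_i+m_{i-1}^2\le n-1\). The index range \(2\le i\le k\) of the hypothesis matches exactly the steps of the peeling at which this estimate is needed.

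Finally I would check the degenerate steps (\(i=1\), and the case \(m_i=1\)) separately and confirm the inequalities with the exact additive constants; the example with \(K_{m_1,m_2}\) and \(n=2m_2-2\) recalled above shows that the factor \(2\) cannot be improved, so the \(\varepsilon\)-room is genuinely necessary. I expect the rigid-end analysis of the selection lemma---quantifying the \(m_{i-1}^2\) error and handling the interaction between the rigid end and the off-path vertices that must remain blue-complete to \(A\)---to be the only substantial obstacle, since the peeling scheme and the rotation argument for the main case are routine.
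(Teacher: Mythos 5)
Your peeling scheme is a sound reduction, and the rotation argument for the case \(|S|\ge m_i\) is correct; but the entire difficulty of the theorem sits in your ``hard case'' \(|S| < m_i\), which you only sketch, and the sketch cannot be completed with the tools you name. The quantitative obstruction is this: your very first selection step (\(i=k\)) must produce \(A\) with \(|A| = m_k\) and \(|A \cup N_R(A)| \le n-1\), and \(n-1\) may be as small as roughly \((2+\varepsilon)m_k\) (e.g.\ when \(m_1+\cdots+m_{k-1}\) is negligible compared to \(m_k\), exactly the regime the unbalance hypothesis permits). Any argument that controls the closed neighbourhood only by a multiple of \(|A|\) --- which is what iterated P\'osa rotations or DFS-type decompositions give: each round \(j\) yields a set \(S_j\) with \(|S_j \cup N(S_j)| \le 3|S_j|\), and unioning rounds until the sets total \(m_k\) vertices gives a bound near \(3m_k\) (or worse, since later rounds can have red neighbours inside earlier removed sets) --- therefore fails, because \(3m_k > (2+\varepsilon)m_k\). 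Your main case succeeds only because \(A \cup N(A)\) is \emph{trapped inside} \(V(P)\); in the rigid case you would need the same kind of containment in a set of size at most \(n-1\), not a \(3|A|\) or \(2|A| + \mathrm{error}\) bound, and nothing in your sketch produces it.

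Second, your claim that the error in the hard case is \(O(m_{i-1}^2)\) is asserted, not derived, and in your peeling order it is implausible: at step \(i\) the only quantities in play are \(|S| < m_i\) and \(|N(S)| < 2m_i\), so the rigid end, its neighbourhood, the DFS classes, and any blue clique extracted from \(N(S)\) all have size of order \(m_i\), not \(m_{i-1}\); there is no mechanism tying the loss to the \emph{smaller} class. The paper obtains a quadratic-in-the-smaller-classes error precisely by doing the opposite of your peeling: it inducts on \(k\), finds the blue \(K_{m_1,\ldots,m_{k-1}}\) among the components of \(G \setminus V(P)\) (off the longest path), and then hosts the \emph{largest} class \(m_k\) on the path, inside \(N_P(K^*)^- \setminus N(K^*)\), where Lemma~\ref{lemma:abab} caps the number of blocked vertices at \((m_1+\cdots+m_{k-1})(m_1+\cdots+m_{k-1}-1)\), an amount the telescoped unbalance conditions (inequality~\eqref{eq:eps+sum}) can absorb. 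P\'osa rotations appear in the paper's proof only to build the \emph{smallest} class \(m_1\), and only when the longest path is short (there the budget closes because \(|P| \le n - m_2 - 2m_1 - 1\) is itself small). In short, the largest class must be constructed last and placed on the path; selecting it first, as your scheme does, is exactly the step that cannot be carried out.
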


Observe that both Corollary~\ref{cor:pok.1} and Theorem~\ref{thm:main}
imply that \(K_N \rightarrow(P_n,H)\) whenever \(H\) admits a coloring
with color classes satisfying their respective unbalance conditions.

Our technique borrows a few ideas from~\cite{pokrovskiy2017ramsey},
but we make only a mild use of P\'osa rotation-extention technique (see Lemma~\ref{lemma:brandt}).
We, alternatively, explore the structure of the graph obtained when removing a longest path,
as well as relations between the neighbors of the vertices of such graph in the path (see Section~\ref{sec:auxiliary}).

\bigskip
\noindent\textbf{Organization of the paper.}
In Section~\ref{sec:auxiliary}, we present some auxiliary results that could be useful in a more general context;
in Section~\ref{sec:bipartite} we verify Theorem~\ref{thm:main} in the special case \(k=2\) (see Theorem~\ref{thm:main-bipartite});
and in Section~\ref{sec:general} we verify Theorem~\ref{thm:main}.
For ease of notation, when dealing with bipartite graphs, we use \(s,t\) instead of \(m_1,m_2\) for the size of its parts.

\section{Auxiliary results}\label{sec:auxiliary}

Given a graph \(G\) and disjoint sets \(X, Y \subseteq V(G)\),
we denote by \(G[X,Y]\) the bipartite graph with vertex set \(X\cup Y\)
and whose edges are the edges of \(G\) that join a vertex of \(X\)
to a vertex of \(Y\);
and by \(e(X,Y)\) the number of edges of \(G[X,Y]\).

In our proof we use the following result to bound
the order of a longest path in the red graph.
Such a result can be proved with a simple depth-first search (see, e.g.,~\cite[Lemma 3.3]{moreira2021ramsey}).

\begin{lemma}\label{lemma:dfs-1}
	Let \(s,t\) be two positive integers,
	and let \(G\) be a graph on \(N\) vertices.
    Then at least one of the following holds: 
    (i) there is a pair \(S\), \(T\) of disjoint sets of vertices
    with \(|S| = s\) and \(|T| = t\) for which \(e(S,T) = 0\); or
    (ii) \(G\) contains a path of order \(N - s - t + 1\).
\end{lemma}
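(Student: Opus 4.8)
The plan is to run a depth-first search (DFS) on \(G\) and exploit its structural invariants. Throughout the execution, DFS maintains a partition of \(V(G)\) into three sets: the set \(D\) of vertices already finished (popped from the stack), the set \(P\) of vertices currently on the stack, and the set \(U\) of vertices not yet reached. I would first record the two standard properties of this process: (a) the stack \(P\), read from bottom to top, is a path of \(G\), since each vertex is pushed only as an unvisited neighbor of the vertex immediately below it; and (b) there are no edges of \(G\) between \(D\) and \(U\), because a vertex is popped into \(D\) only once all of its neighbors have been visited, and visited vertices never return to \(U\).

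Next I would assume that conclusion (ii) fails and deduce (i). If \(N < s+t\), then \(N-s-t+1 \le 0\) and (ii) holds trivially, so I may assume \(N \ge s+t\); in particular the value \(s\) lies in \(\{0,1,\ldots,N\}\). Since (ii) fails, \(G\) has no path of order \(N-s-t+1\), and hence by property (a) the stack satisfies \(|P| \le N-s-t\) at every moment of the DFS.

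The key step is to select the right moment. The quantity \(|D|\) starts at \(0\), ends at \(N\), and increases by exactly one at each pop, so it attains every integer between \(0\) and \(N\); in particular there is a first moment \(M\) at which \(|D| = s\). At that moment, using \(|D|+|P|+|U| = N\) together with \(|P| \le N-s-t\), I obtain \(|U| = N - s - |P| \ge t\). Taking \(S := D\) and any \(t\)-subset \(T \subseteq U\) then yields disjoint sets of sizes \(s\) and \(t\), and by property (b) there are no edges between them, so \(e(S,T) = 0\) and (i) holds.

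The argument is short, and the only point requiring genuine care is the verification of invariant (b)—that finished vertices retain no neighbors among the unreached ones—since this is exactly what guarantees \(e(S,T)=0\) for the sets produced at moment \(M\). A minor secondary point is the bookkeeping of the edge cases \(N < s+t\), which fall under conclusion (ii) vacuously.
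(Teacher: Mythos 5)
Your proof is correct, and it follows exactly the route the paper itself indicates: the paper does not spell out a proof of this lemma but defers to a ``simple depth-first search'' argument (citing Lemma 3.3 of Moreira's paper), which is precisely the DFS-invariant argument you give---the stack is a path, finished vertices have no neighbors among unreached ones, and stopping at the first moment \(|D|=s\) yields the required pair \(S,T\).
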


The following result on split graphs is also useful to find a blue copy of \(K_{s,t}\).

\begin{lemma}\label{lemma:split}
	Let \(s,t\) be two positive integers with \(s\leq t\),
	and let \(G\) be a graph obtained from a clique \(X\) and a graph with vertex set \(Y\) with \(X\cap Y = \emptyset\) 
	by joining every vertex of \(X\) to every vertex of \(Y\).
	If \(|X| \geq s\) and \(|X|+|Y| \geq s+t\),
	then \(G\) contains a \(K_{s,t}\).
\end{lemma}

\begin{proof}
	Let \(S\subseteq X\) be a set of size \(s\)
	and \(T \subseteq (X\cup Y)\setminus S\) be a set of size \(t\),
    then \(G[S,T]\) is the desired \(K_{s,t}\).
\end{proof}

Given a graph \(G\) and a set \(X\subseteq V(G)\),
we denote by \(N_G(X)\) the set of vertices in \(V(G)\setminus X\)
that are adjacent to at least one vertex of \(X\).
We omit subscripts when it is clear from the context.
A \emph{component} of a graph \(G\) is a set \(F\) of vertices of \(G\) 
for which \(G[F]\) is a maximal connected subgraph of \(G\).
Now, consider a path \(P\subseteq G\).
In what follows, we fix an ordering \(P = u_0 \cdots u_\ell\) of \(V(P)\).
Let \(F\) be a component of \(G\setminus V(P)\),
and observe that \(N(F) \subseteq V(P)\).
We denote by \(N(F)^-\) the set \(\{u_{i-1} : u_i \in N(F)\}\).
Also, given vertices \(u\) and \(u'\) in \(P\),
we denote by \(uPu'\) the subpath of \(P\) joining \(u\) and \(u'\).
When \(P\) is a longest path in \(G\),
we obtain the following.

\begin{lemma}\label{lemma:blue-clique}
    Let \(G\) be a graph, let \(P\) be a longest path in \(G\),
    and let \(F\) be a component of \(G\setminus V(P)\).
    Then (i) \(N(F)^- \cap N(F) = \emptyset\); and (ii) \(N(F)^-\) is an independent set.
\end{lemma}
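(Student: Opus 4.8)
The plan is to prove both statements by contradiction, in each case exhibiting a path in \(G\) strictly longer than \(P\). The key preliminary observation is that, since \(F\) is connected and \(N(F)\subseteq V(P)\), whenever \(u_i,u_j\in N(F)\) there is a path \(Q\) with \(V(Q)\subseteq F\) joining a neighbor \(a\in F\) of \(u_i\) to a neighbor \(b\in F\) of \(u_j\); such a \(Q\) has at least one vertex, and the degenerate case \(a=b\) (with \(Q\) a single vertex) is allowed. This detour through \(F\) is exactly what lets us lengthen \(P\), and its vertices are automatically disjoint from \(V(P)\).

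For part (i), I would suppose that some vertex \(v\) lies in \(N(F)^-\cap N(F)\). Then \(v=u_{i-1}\) with \(u_i\in N(F)\), and also \(v=u_{i-1}\in N(F)\); hence the two consecutive vertices \(u_{i-1}\) and \(u_i\) are both adjacent to \(F\). Choosing \(a,b\in F\) and \(Q\) as above, with \(a\) adjacent to \(u_{i-1}\) and \(b\) adjacent to \(u_i\), the concatenation
\[
u_0 P u_{i-1}\, a\, Q\, b\, u_i P u_\ell
\]
is a path (its \(F\)-vertices being disjoint from \(V(P)\)) of order \(|P|+|Q|>|P|\), contradicting the maximality of \(P\).

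For part (ii), I would take two distinct vertices \(u_{i-1},u_{j-1}\in N(F)^-\) with \(i<j\), so that \(u_i,u_j\in N(F)\), and suppose toward a contradiction that \(u_{i-1}u_{j-1}\in E(G)\). First I note that part (i) forces \(j\geq i+2\): otherwise \(u_i\) and \(u_{i+1}=u_j\) both lie in \(N(F)\), which would place \(u_i\) in \(N(F)^-\cap N(F)\). Thus \(u_{i-1}u_{j-1}\) is a genuine chord rather than a path edge. Now I would reverse the segment between \(u_i\) and \(u_{j-1}\) and splice in the \(F\)-detour, forming
\[
u_0 P u_{i-1}\, u_{j-1} P u_i\, a\, Q\, b\, u_j P u_\ell,
\]
where \(u_{j-1}Pu_i\) denotes the subpath of \(P\) traversed from \(u_{j-1}\) back down to \(u_i\). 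One checks that every consecutive pair in this sequence is an edge of \(G\): the original path edges, the chord \(u_{i-1}u_{j-1}\), the reversed path edges, the two \(F\)-edges \(u_i a\) and \(b u_j\), and the edges of \(Q\). Moreover the sequence visits each vertex of \(V(P)\) exactly once together with the vertices of \(Q\), so it is a path of order \(|P|+|Q|>|P|\), again contradicting the maximality of \(P\).

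The genuinely routine parts are the vertex-disjointness and the edge verifications, which I would state but not belabor. The only real subtlety, and the step I expect to be the main obstacle, is the rerouting in (ii): one must confirm that reversing \(u_i P u_{j-1}\) and inserting the detour yields a bona fide simple path rather than a mere walk, and that the resulting order strictly exceeds \(|P|\). This is of the standard P\'osa-rotation type, so the bookkeeping should go through cleanly once the case \(j\geq i+2\) is secured via part (i).
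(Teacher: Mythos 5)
Your proposal is correct and follows essentially the same argument as the paper: for (i) you splice a detour through \(F\) in place of the edge \(u_{i-1}u_i\), and for (ii) you use the chord \(u_{i-1}u_{j-1}\), reverse the segment \(u_iPu_{j-1}\), and insert the \(F\)-detour between \(u_i\) and \(u_j\), which is exactly the paper's path \((P\cup Q) - u_{i-1}u_i - u_{j-1}u_j + u_{i-1}u_{j-1}\). Your explicit reduction of the case \(j=i+1\) to part (i) is a minor presentational difference (the paper's uniform edge-swap handles it implicitly), not a different method.
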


\begin{proof}
Let \(P = u_0 \cdots u_\ell\) be as above.
(i)~If \(u_i \in N(F)^-\cap N(F)\), then \(u_i,u_{i+1}\in N(F)\).
Let \(Q\) be a path joining \(u_i\) to \(u_{i+1}\) whose internal vertices are in \(F\).
Then \((P\cup Q) - u_iu_{i+1}\) is a path in \(G\) with at least \(|P|+1\) vertices, a contradiction to the maximality of \(P\).
(ii)~By the maximality of \(P\) we have \(u_0\notin N(F)\).
Let \(u_i, u_j \in N(F)\) with \(1 \leq i < j\) and suppose that \(u_{i-1}u_{j-1} \in E(G)\).
Let \(Q\) be a path joining \(u_i\) to \(u_j\)
whose internal vertices are in \(F\).
Then \((P\cup Q) - u_{i-1}u_i - u_{j-1}u_j + u_{i-1}u_{j-1}\)
is a path in \(G\) with at least \(|P|+1\) vertices (see Figure~\ref{fig:blue-clique}),
a contradiction to the maximality of~\(P\). 
\end{proof}

The next lemma gives a bound on the number of vertices of \(N(F)^-\) that can be 
adjacent to a vertex in \(F'\) for any two distinct components \(F\) and \(F'\) of \(G\setminus V(P)\).
Its proof is analogous to the proof above.
We include it for completeness.

\begin{lemma}\label{lemma:abab}
    Let \(G\) be a graph, let \(P\) be a longest path in \(G\),
    and let \(F\) and \(F'\) be distinct components of \(G\setminus V(P)\).
    Then \(|N(F)^- \cap N(F')| \leq 1\).
\end{lemma}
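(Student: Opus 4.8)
The plan is to mimic the argument for Lemma~\ref{lemma:blue-clique}(ii), replacing the single edge \(u_{i-1}u_{j-1}\) used there by a detour through the second component \(F'\). Suppose, for contradiction, that \(N(F)^- \cap N(F')\) contains two distinct vertices. Since \(P\) is a longest path, neither \(u_0 \in N(F)\) nor \(u_0 \in N(F')\), so these two vertices can be written as \(u_{i-1}\) and \(u_{j-1}\) with \(1 \leq i < j\). By the definition of \(N(F)^-\) this gives \(u_i, u_j \in N(F)\), while membership in \(N(F')\) gives \(u_{i-1}, u_{j-1} \in N(F')\).

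First I would extract two connecting paths. Since \(F\) is connected and both \(u_i\) and \(u_j\) have a neighbor in \(F\), there is a path \(Q\) from \(u_i\) to \(u_j\) whose internal vertices all lie in \(F\); symmetrically there is a path \(Q'\) from \(u_{i-1}\) to \(u_{j-1}\) whose internal vertices all lie in \(F'\). Each of \(Q\) and \(Q'\) has at least one internal vertex.

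Next I would splice these into \(P\). Deleting the edges \(u_{i-1}u_i\) and \(u_{j-1}u_j\) breaks \(P\) into the three subpaths \(u_0 P u_{i-1}\), \(u_i P u_{j-1}\), and \(u_j P u_\ell\). Traversing \(u_0 P u_{i-1}\), then \(Q'\) to \(u_{j-1}\), then the middle subpath in reverse from \(u_{j-1}\) to \(u_i\), then \(Q\) to \(u_j\), and finally \(u_j P u_\ell\), produces a walk in \(G\) from \(u_0\) to \(u_\ell\).

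The crux is to verify that this walk is genuinely a path. Its vertex set is \(V(P)\) together with the internal vertices of \(Q\) and of \(Q'\); because \(F\) and \(F'\) are distinct components, they are disjoint from one another and from \(V(P)\), so no vertex repeats. As \(Q\) and \(Q'\) each contribute at least one new vertex, the resulting path has at least \(|P| + 2\) vertices, contradicting the maximality of \(P\). I expect the only delicate point to be confirming that the reconnection remains valid in the degenerate case \(j = i + 1\), where the middle subpath collapses to the single vertex \(u_i = u_{j-1}\); tracing the construction shows that in this case the walk simply reads \(\cdots u_{i-1} \to F' \to u_i \to F \to u_{i+1} \cdots\), which is still a path on at least \(|P| + 2\) vertices.
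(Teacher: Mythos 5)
Your proof is correct and is essentially the paper's own argument: the paper likewise takes two vertices of \(N(F)^-\cap N(F')\), routes one detour through \(F\) between their successors on \(P\) and another through \(F'\) between the vertices themselves, and deletes the two edges \(u_{i-1}u_i\), \(u_{j-1}u_j\) to obtain a path with at least \(|P|+2\) vertices. Your extra verification that the splice is a genuine path, including the degenerate case \(j=i+1\), is a fine (if not strictly necessary) elaboration of what the paper leaves implicit.
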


\begin{proof}
    Let \(P = u_0 \cdots u_\ell\) be as above.
    Suppose there are two distinct vertices \(u_i,u_j\in N(F)^- \cap N(F')\).
    By the definition of \(N(F)^-\), we have \(u_{i+1},u_{j+1} \in N(F)\).
    Let \(Q\) (resp. \(Q'\)) be a path in \(G\) connecting \(u_{i+1}\) to \(u_{j+1}\) (resp. \(u_i\) to \(u_j\)) whose internal vertices are in \(F\) (resp. in \(F'\)).
    Then \(P^* = \big(P\cup Q \cup Q'\big) - u_iu_{i+1} - u_ju_{j+1}\)
    is a path in \(G\) with at least \(|P| + 2\) vertices (see Figure~\ref{fig:abab}),
    a contradiction to the maximality of \(P\).
\end{proof}

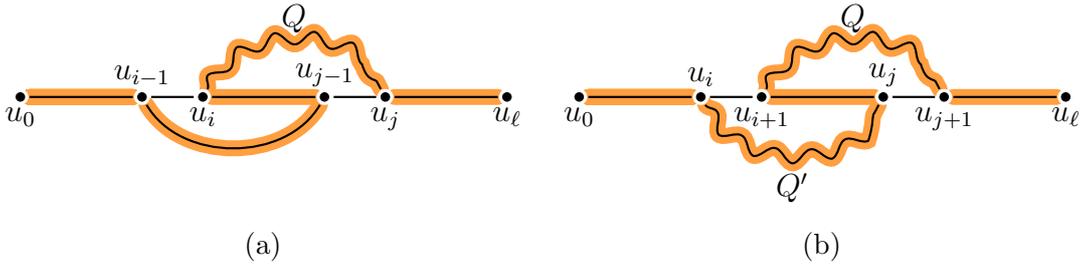
\begin{figure}[h]
    \centering
    \begin{subfigure}{0.45\textwidth}
        \centering
        \begin{tikzpicture}[scale = 0.8]

            \foreach \nn in {0,2,3,5,6,8}{
                \node (\nn) [black vertex] at (\nn,0) {};
            }

            \node [anchor=north] at (0) {$u_0$};
            \node [anchor=south] at (2) {$u_{i-1}$};
            \node [anchor=north] at (3) {$u_{i}$};
            \node [anchor=south] at (5) {$u_{j-1}$};
            \node [anchor=north] at (6) {$u_{j}$};
            \node [anchor=north] at (8) {$u_\ell$};
            \node [] at (4.5,1.3) {$Q$};
            \node [] at (3.5,-1.5) {\color{white}$Q'$};

      \draw [largepath] 
      (2) -- (0) -- (2) -- (0)
      (2) to  [bend right = 60]  (5) 
      (5) to (3) 
      (6) to (8);
      \draw[largepath,decorate, decoration=snake, segment length=5mm] (3) to [bend left = 60] (6) ;

            \draw [thick] (0) -- (2) -- (3) -- (5) -- (6) -- (8);
            \draw [thick] (2) to [bend right = 60] (5);
            \draw [thick,decorate, decoration=snake, segment length=5mm] (3) to [bend left = 60] (6);

        \foreach \nn in {0,2,3,5,6,8}{
            \fill [very thick,draw=white,fill=black] (\nn,0) circle (3pt);
        }
        \end{tikzpicture}
    \caption{}
    \label{fig:blue-clique}
    \end{subfigure}
    \begin{subfigure}{0.45\textwidth}
        \centering
        \begin{tikzpicture}[scale = 0.8]

            \foreach \nn in {0,2,3,5,6,8}{
                \node (\nn) [black vertex] at (\nn,0) {};
            }

            \node [anchor=north] at (0) {$u_0$};
            \node [anchor=south] at (2) {$u_{i}$};
            \node [anchor=north] at (3) {$u_{i+1}$};
            \node [anchor=south] at (5) {$u_{j}$};
            \node [anchor=north] at (6) {$u_{j+1}$};
            \node [anchor=north] at (8) {$u_\ell$};
            \node [] at (4.5,1.3) {$Q$};
            \node [] at (3.5,-1.5) {$Q'$};

      \draw [largepath] 
      (2) -- (0) -- (2) -- (0)
      (5) to (3) 
      (6) to (8);

        \draw[largepath,decorate, decoration=snake, segment length=5mm] (2) to  [bend right = 60]  (5) ;
        \draw[largepath,decorate, decoration=snake, segment length=5mm] (3) to [bend left = 60] (6) ;

        \draw [thick] (0) -- (2) -- (3) -- (5) -- (6) -- (8);
        \draw [thick,decorate, decoration=snake, segment length=5mm] (2) to [bend right = 60] (5);
        \draw [thick,decorate, decoration=snake, segment length=5mm] (3) to [bend left = 60] (6);

        \foreach \nn in {0,2,3,5,6,8}{
            \fill [very thick,draw=white,fill=black] (\nn,0) circle (3pt);
        }
        \end{tikzpicture}
    \caption{}
    \label{fig:abab}
    \end{subfigure}
  \caption{
    Left: A longer path obtained in the case $N(F)^-$ is not an independent set;
    Right: A longer path obtained in the case \(|N(F)^- \cap N(F')| \geq 2\).}
\end{figure}

\section{Paths versus unbalanced bipartite graphs}\label{sec:bipartite}

For organizational purposes we first verify Theorem~\ref{thm:main} in the case of bipartite graphs (see Theorem~\ref{thm:main-bipartite}), i.e., when \(k= 2\),
to serve as the base case of our main induction argument.
For that we prove the following lemma.

\begin{lemma}\label{lemma:asymmetric}
    Let \(s\), \(t\), and \(r\) be positive integers with \(r \leq s \leq t\), 
    and let \(G\) be a graph with order \(N\geq 2t+s^2-1\).
    Let \(P\subseteq G\) be a longest path in \(G\).
    If there are \(r\) components in \(G\setminus V(P)\) whose union contains at least \(s\) vertices, then  \(K_{s,t}\subseteq \overline{G}\).
\end{lemma}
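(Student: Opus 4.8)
The plan is to work with the equivalent formulation: \(K_{s,t}\subseteq\overline{G}\) exactly when there are disjoint \(S,T\subseteq V(G)\) with \(|S|=s\), \(|T|=t\) and \(e(S,T)=0\) (no edge of \(G\) between them). I would argue by contradiction, assuming no such pair exists, and then produce one. The first step is to apply Lemma~\ref{lemma:dfs-1} to the integers \(s,t\): its alternative (i) is precisely the existence of \(S,T\) with \(e(S,T)=0\), so under our assumption alternative (ii) must hold and \(|P|\ge N-s-t+1\). Consequently every component of \(G\setminus V(P)\), and in particular each of the \(r\) prescribed components, has at most \(s+t-1\) vertices. Throughout, the mechanism for producing \(T\) will be to fix \(S\) inside the components and count its non-neighbours: the set \(V(G)\setminus(S\cup N_G(S))\) has \(N-s-|N_G(S)|\) vertices and is fully anti-adjacent to \(S\), so it suffices to arrange \(|N_G(S)|\le N-s-t\).

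For the first regime, suppose some prescribed component \(F\) has \(|F|\ge s\), and take any \(S\subseteq F\). The on-path neighbours of \(S\) lie in \(N(F)\) and the internal ones in \(F\setminus S\), so \(|N_G(S)|\le |N(F)|+|F|-s\). If \(|N(F)|+|F|\le N-t\) this already gives \(\ge t\) non-neighbours and we are done. Otherwise \(|N(F)|+|F|>N-t\), and here I would invoke the split structure: by Lemma~\ref{lemma:blue-clique}(ii) the set \(N(F)^-\) is independent in \(G\), hence a clique in \(\overline{G}\), and by (i) it has no \(G\)-edge to \(F\), so in \(\overline{G}\) it is completely joined to \(F\). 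Applying Lemma~\ref{lemma:split} with clique \(X=N(F)^-\) and \(Y=F\), the bounds \(|F|\le s+t-1\) and \(N\ge 2t+s^2-1\) (for \(s\ge 2\)) yield \(|X|=|N(F)|>s^2-s\ge s\) and \(|X|+|Y|>N-t\ge s+t\), so \(K_{s,t}\subseteq\overline{G}\), a contradiction.

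For the second regime, all prescribed components have fewer than \(s\) vertices. I would choose a \emph{minimal} subfamily \(I\) of them whose union \(W_I=\bigcup_{i\in I}F_i\) has \(|W_I|\ge s\); minimality forces \(|W_I|\le 2s-2\) and \(|I|\le s\). Fix \(S\subseteq W_I\) with \(|S|=s\). Its on-path neighbours lie in \(A_I=\bigcup_{i\in I}N(F_i)\) and its internal neighbours in \(W_I\setminus S\), so \(|N_G(S)|\le |A_I|+|W_I|-s\). The crux is to bound \(|A_I|\): the predecessor map is injective (the endpoint is not in any \(N(F_i)\)), so \(|A_I^-|=|A_I|\) with \(A_I,A_I^-\subseteq V(P)\), while Lemma~\ref{lemma:blue-clique}(i) kills the diagonal terms and Lemma~\ref{lemma:abab} gives \(|A_I\cap A_I^-|=\big|\bigcup_{i\ne j}(N(F_i)^-\cap N(F_j))\big|\le |I|(|I|-1)\). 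Hence \(2|A_I|\le |A_I\cup A_I^-|+|A_I\cap A_I^-|\le |V(P)|+|I|(|I|-1)\le (N-|W_I|)+|I|(|I|-1)\). Substituting this into the neighbour bound reduces the goal \(|N_G(S)|\le N-s-t\) to the inequality \(N\ge 2t+|W_I|+|I|(|I|-1)\), which I would close using that \(|W_I|+|I|(|I|-1)\le s^2\) over all admissible choices.

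The step I expect to be the main obstacle is exactly this last, tight counting. One must verify that \(|W_I|+|I|(|I|-1)\le s^2\), with equality only in the all-singleton case (\(|I|=s\), \(|W_I|=s\)); checking that minimality forces \(|W_I|\le 2s-2\), so that every non-singleton family stays at \(\le s^2-s<s^2\), requires a careful optimisation over component sizes. In the extremal singleton case the inequality \(N\ge 2t+s^2\) just fails, and one closes the gap by the integrality/parity observation that \(2|A_I|\) is even while the bound \((N-|W_I|)+|I|(|I|-1)=2t+2s^2-2s-1\) is odd at \(N=2t+s^2-1\); this is precisely where the \(-1\) in the hypothesis is consumed. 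Finally, the degenerate case \(s=1\) (finding a vertex with at least \(t\) non-neighbours) should be handled directly, as the split inequalities above were justified only for \(s\ge 2\).
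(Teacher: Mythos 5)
Your proof is correct, but it reorganizes the paper's argument in a genuinely different way, so a comparison is worthwhile. The easy regime (some prescribed component \(F\) has \(|F|\ge s\)) is essentially the paper's base case \(r=1\): both apply Lemma~\ref{lemma:split} to \(X=N(F)^-\), \(Y=F\) via Lemma~\ref{lemma:blue-clique}; your upfront use of Lemma~\ref{lemma:dfs-1} (under the contradiction hypothesis) to cap component sizes at \(s+t-1\) simply replaces the paper's subcase \(|N(F_1)|\le s-1\), where it invokes Lemma~\ref{lemma:dfs-1} at the end instead. The hard regime diverges. The paper inducts on \(r\): if some component has at least \(s-(r-2)\) vertices it drops to \(r-1\) components, so it may assume every component has at most \(s-(r-1)\) vertices, giving the stronger bound \(|F^*|\le rs-r(r-1)\le s^2-r(r-1)\); it then uses a dichotomy --- either \(t\) vertices avoid \(F^*\cup N(F^*)\), or \(|N(F^*)|\ge N-t+1-|F^*|\ge t+r(r-1)\), in which case Lemma~\ref{lemma:abab} places \(T\) inside \(N_P(F^*)^-\setminus N(F^*)\). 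You avoid induction via a minimal subfamily (forcing \(|W_I|\le 2s-2\) and \(|I|\le s\)) and collapse the dichotomy into a single double count, \(2|A_I|=|A_I\cup A_I^-|+|A_I\cap A_I^-|\le |V(P)|+|I|(|I|-1)\), taking \(T\) among arbitrary non-neighbours of \(S\). The cost is quantitative: the paper's dichotomy carries an extra \(+1\) (from negating \(N-|F^*|-|N(F^*)|\ge t\)), so its count closes cleanly at \(N=2t+s^2-1\), whereas your requirement \(N\ge 2t+|W_I|+|I|(|I|-1)\) fails by exactly \(1\) in the all-singleton case \(|I|=|W_I|=s\), and the parity patch (even \(2|A_I|\) against an odd upper bound) is then genuinely needed. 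It is also correct, as is the optimisation behind it: minimality does force \(|I|=s\) to imply all singletons, and any other admissible family satisfies \(|W_I|+|I|(|I|-1)\le(2s-2)+(s-1)(s-2)=s^2-s\). What your route buys is a self-contained, induction-free proof (which, incidentally, never uses the hypothesis \(r\le s\)); what the paper's buys is cleaner arithmetic, with no extremal case analysis and no parity considerations.
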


\begin{proof}
Let \(G' = G\setminus V(P)\) and let \(F_1,\ldots,F_r\) be components of \(G'\)
with \(|F_1\cup \cdots \cup F_r| \geq s\).
The proof follows by induction on \(r\).
First, suppose \(r = 1\),
i.e., that \(G'\) has one component \(F_1\) of order at least \(s\).
Note that no vertex in \(V(G)\setminus \big(F_1 \cup N_P(F_1)\big)\) is adjacent to a vertex in \(F_1\).
Thus, if \(N - |F_1| - |N(F_1)| \geq t\),
then \(K_{s,t}\subseteq \overline{G}[F_1, V(G)\setminus \big(F_1 \cup N_P(F_1)\big)]\) as desired.
Therefore, we may assume
\begin{equation}\label{eq:lower:F+NF}
    |F_1| + |N(F_1)| \geq N - (t-1) \geq t + s^2 \geq t + s.
\end{equation}
Now, suppose that \(|N(F_1)| \geq s\).
Note that, by the maximality of \(P\), the end vertices of \(P\) are not adjacent to vertices of \(F_1\).
Thus, we have \(|N_P(F_1)^-| = |N_P(F_1)| = |N(F_1)| \geq s\).
By Lemma~\ref{lemma:blue-clique}, no vertex of \(N_P(F_1)^-\) is adjacent to a vertex of \(F_1\), and \(N_P(F_1)^-\) is an independent set.
Thus,~\eqref{eq:lower:F+NF} implies that \(\overline{G}[N_P(F_1)^-\cup F_1]\) is a graph as in the statement of Lemma~\ref{lemma:split},
and hence \(K_{s,t}\subseteq \overline{G}[N_P(F_1)^- \cup F_1]\), as desired.

Therefore we may assume that \(|N(F_1)| \leq s - 1\).
Together with~\eqref{eq:lower:F+NF}, this implies that
\begin{equation}\label{eq:lower:F}
    |F_1| \geq t + s(s-1) + 1 \geq t + s.
\end{equation}
Observe that \(|P| + |F_1| \leq N\),
and, by~\eqref{eq:lower:F}, we have \(|P| \leq N - s - t\).
Therefore, by Lemma~\ref{lemma:dfs-1}, we have \(K_{s,t}\subseteq\overline{G}\), as desired.
This proves the case \(r = 1\).

Now, suppose  \(r \geq 2\) and let \(F^*=F_1\cup \cdots \cup F_r\). 
Although we can get a slightly better upper bound on \(|F^*|\),
here we prove that \(|F^*| \leq s^2 - r(r-1)\).
Indeed, if \(|F_i| \geq s - (r - 2)\) for some \(i\in [r]\),
then we can pick \(F_i\) and \(r-2\) other components of \(G\setminus V(P)\)
and obtain \(r-1\) components whose union contains \(s\) vertices.
But then, by the induction hypothesis, we have \(K_{s,t}\subseteq \overline{G}\).
Thus, we may assume \(|F_i| \leq s - (r-1)\) for every \(i\in [r]\).
Therefore, since \(r \leq s\), we have
    \begin{equation}\label{eq:lemma1_O*_upper_bound_general}
        |F^*|\leq r \big(s - (r-1)\big) = r\cdot s - r(r-1) \leq s^2 - r(r-1),
    \end{equation}
as desired.

Now, analogously to the case \(r = 1\), if \(N-|F^*|-|N(F^*)|\geq t \), then \(K_{s,t}\subseteq \overline{G}[F^*,V(G)\setminus\big(F^*\cup N(F^*)\big)]\), as desired.
Thus, we may assume that  
\begin{equation}\label{eq:lemma1_N(O)_lower_bound_general}
    N-|F^*|-|N(F^*)|\leq t-1.
\end{equation}

Summing~\eqref{eq:lemma1_O*_upper_bound_general} and~\eqref{eq:lemma1_N(O)_lower_bound_general},
by the hypothesis in \(N\),
we obtain 
\[
    |N(F^*)| \geq N - t + 1 + r(r-1) -s^2 \geq t + r(r-1).
\]
Now, by Lemma~\ref{lemma:abab}
given \(i\) and \(j\) with \(i\neq j\),
\(N_P(F_i)^-\) has at most one vertex in \(N(F_j)\).
Consequently, \(N_P(F_i)^-\) has at most \(r-1\) vertices of \(N(F^*)\).
Thus, \(N_P(F^*)^- = \cup_{i=1}^r N_P(F_i)^-\) has at most \(r(r-1)\) vertices of \(N(F^*)\).
Therefore,  there is a set \(N^*\subseteq N_P^-(F^*)\setminus N(F^*)\) with  \(t\) vertices, 
and hence \(K_{s,t}\subseteq \overline{G}[F^*,N^*]\), as desired.
\end{proof}

Now, we can prove Theorem~\ref{thm:main-bipartite},
which requires a slightly weaker unbalance condition than Theorem~\ref{thm:main}.

\begin{theorem}\label{thm:main-bipartite}
    Let \(\varepsilon \in (0,1]\),
    and let \(s \leq t\) be positive integers such that \(\varepsilon \cdot t \geq s^2 - (3+\varepsilon)s\). If \(n \geq (2+\varepsilon)(s+t)\) and \(N \geq (n-1) + s\), then \(K_N \rightarrow (P_n,K_{s,t})\).
\end{theorem}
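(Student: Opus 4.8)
The plan is to prove the statement by contradiction. Suppose we have a red-blue coloring of $E(K_N)$ with no red $P_n$ and no blue $K_{s,t}$. Let $G$ be the red graph on $N$ vertices, so that $\overline{G}$ is the blue graph, and let $P$ be a longest path in $G$. By assumption $|P| \leq n-1$. The strategy is to locate, inside $G \setminus V(P)$, enough ``leftover'' structure to invoke Lemma~\ref{lemma:asymmetric} and thereby find a blue copy of $K_{s,t}$ in $\overline{G}$, contradicting the coloring. The key quantitative observation is that since $N \geq (n-1)+s$ and $|P| \leq n-1$, we have $|V(G) \setminus V(P)| = N - |P| \geq s$. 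Thus the components of $G \setminus V(P)$ collectively contain at least $s$ vertices, which is precisely the hypothesis needed to run Lemma~\ref{lemma:asymmetric}.

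First I would verify the arithmetic needed to apply Lemma~\ref{lemma:asymmetric}: that lemma requires $N \geq 2t + s^2 - 1$, and that there exist some $r \leq s$ components whose union has at least $s$ vertices. For the order bound, I would combine $N \geq (n-1)+s \geq (2+\varepsilon)(s+t) - 1 + s$ with the unbalance hypothesis $\varepsilon \cdot t \geq s^2 - (3+\varepsilon)s$. Rearranging the unbalance condition gives $\varepsilon t + (3+\varepsilon)s \geq s^2$, and unpacking $(2+\varepsilon)(s+t) = 2s + 2t + \varepsilon s + \varepsilon t$, the goal is to show the lower bound on $N$ forces $N \geq 2t + s^2 - 1$. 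This is a direct substitution: I expect $N \geq (2+\varepsilon)(s+t) + s - 1 = 2t + \varepsilon t + 3s + \varepsilon s - 1 \geq 2t + s^2 - 1$, where the last inequality is exactly the unbalance hypothesis rearranged. For the component count, since $G \setminus V(P)$ has at least $s$ vertices total, I can take $r$ to be the number of components needed to accumulate $s$ vertices; choosing components greedily, $r \leq s$ since each contributes at least one vertex.

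With both hypotheses of Lemma~\ref{lemma:asymmetric} verified, I would conclude $K_{s,t} \subseteq \overline{G}$, i.e., the blue graph contains $K_{s,t}$. This contradicts the assumption that the coloring has no blue $K_{s,t}$, completing the proof. The heart of the argument is therefore entirely packaged inside Lemma~\ref{lemma:asymmetric}, which already handles the delicate case analysis on $r$ and the structure of $N(F)^-$ via the P\'osa-type exchange arguments of Lemmas~\ref{lemma:blue-clique} and~\ref{lemma:abab}.

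The main obstacle I anticipate is purely arithmetic: ensuring the unbalance condition $\varepsilon \cdot t \geq s^2 - (3+\varepsilon)s$ is exactly strong enough to bridge the gap between the path-Ramsey lower bound $N \geq (n-1)+s$ and the order requirement $N \geq 2t + s^2 - 1$ of Lemma~\ref{lemma:asymmetric}. I would double-check the edge cases where $s$ is small (so that $s^2 - (3+\varepsilon)s$ could be negative, making the condition vacuously satisfiable) and confirm that $r \leq s \leq t$ holds as required by the lemma. If the substitution does not close cleanly, the fallback is to note that Lemma~\ref{lemma:asymmetric} is stated for \emph{any} choice of $r$ components summing to at least $s$ vertices, so I have freedom to optimize the choice of $r$; but I expect the straightforward greedy choice to suffice.
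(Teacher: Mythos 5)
Your proposal is correct and takes essentially the same approach as the paper: assume there is no red \(P_n\), deduce \(N-|P|\geq s\) from \(N\geq (n-1)+s\), verify \(N\geq 2t+s^2-1\) by expanding \((2+\varepsilon)(s+t)+s-1\) and applying the unbalance hypothesis, and then invoke Lemma~\ref{lemma:asymmetric} to produce a blue \(K_{s,t}\). If anything, your greedy choice of \(r\leq s\) components accumulating at least \(s\) vertices is slightly more careful than the paper's application ``with \(r=s\)'', which implicitly relies on exactly this flexibility in \(r\) when \(G\setminus V(P)\) has fewer than \(s\) components.
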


\begin{proof}
Fix a red--blue coloring of \(K_N\)
and let \(G\) be the graph induced by its red edges.
Let \(P\subseteq G\) be a longest path in \(G\).
If \(N-|P|\leq s-1\), then \(|P|\geq n\), as desired.
Thus,  we may assume that \(N-|P|\geq s\). 
However, as \(N = (2+\varepsilon)(s+t) -1 + s = 2t + (3+\varepsilon)s + \varepsilon \cdot t - 1 \geq 2t + s^2 - 1\),
applying the Lemma~\ref{lemma:asymmetric} with  \(r=s\)
we have \(K_{s,t} \subseteq \overline{G}\), as desired.
\end{proof}

\section{Paths versus unbalanced graphs}\label{sec:general}

To prove the main result of this section we need the following consequence of the well-known P\'osa rotation-extention technique~\cite{brandt2006global,posa1976hamiltonian},
which we restate for our purposes (for a proof see, e.g.,~\cite{bonamy2023separating}).

\begin{lemma}[\cite{brandt2006global}]\label{lemma:brandt}
  Let  \(P\) be a longest path of a graph $G$.
  Then there is a set \(S\subseteq V(P)\)
  for which \(N(S) \subseteq V(P)\) and \(|N(S)| \leq 2|S|\).
\end{lemma}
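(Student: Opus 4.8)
The plan is to reprove this statement via the P\'osa rotation--extension technique~\cite{posa1976hamiltonian,brandt2006global}. First I would fix an orientation \(P = v_0 v_1 \cdots v_\ell\) of the longest path and single out \(v_\ell\) as a fixed endpoint. A rotation replaces \(P\) by another path on the same vertex set \(V(P)\): if the free endpoint \(v_0\) is adjacent in \(G\) to some interior vertex \(v_i\) with \(2 \le i \le \ell\), then \(v_{i-1} v_{i-2} \cdots v_0 v_i v_{i+1} \cdots v_\ell\) is again a path on \(V(P)\), now with free endpoint \(v_{i-1}\). Since \(P\) is longest, every path so obtained is also longest. I would then let \(S\) be the set of all vertices arising as the free endpoint of some path obtained from \(P\) by a (possibly empty) sequence of rotations keeping \(v_\ell\) fixed; in particular \(v_0 \in S\) and \(S \subseteq V(P)\).

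The first required property, \(N(S) \subseteq V(P)\), is immediate from maximality: each \(s \in S\) is an endpoint of a longest path \(P_s\) with \(V(P_s) = V(P)\), so a neighbour of \(s\) lying outside \(V(P)\) would extend \(P_s\) into a longer path, a contradiction; taking the union over \(s \in S\) yields \(N(S) \subseteq V(P)\). For the bound \(|N(S)| \le 2|S|\), the idea is to charge every vertex of \(N(S)\) to a path-neighbour of \(S\) on the \emph{fixed} path \(P\). Concretely, I would aim to show that \(N(S) \subseteq S^- \cup S^+\), where \(S^- = \{v_{i-1} : v_i \in S\}\) and \(S^+ = \{v_{i+1} : v_i \in S\}\) are the predecessors and successors of \(S\) along \(P\). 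Granting this, \(|N(S)| \le |S^-| + |S^+| \le 2|S|\), since the two shift maps \(v_i \mapsto v_{i-1}\) and \(v_i \mapsto v_{i+1}\) are injective.

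The heart of the argument, and the step I expect to be the main obstacle, is establishing \(N(S) \subseteq S^- \cup S^+\). The basic move is a single rotation: if \(s \in S\) and \(sw \in E(G)\), then on a longest path \(P_s\) witnessing \(s \in S\) the vertex \(w\) has a predecessor \(w'\) when travelling from \(s\) towards \(v_\ell\), and rotating across the edge \(sw\) promotes \(w'\) to a free endpoint, so \(w' \in S\) and \(w\) is the \(P_s\)-successor of a vertex of \(S\). The difficulty is that \(P_s\) is a rotated path rather than the fixed path \(P\), so this only places \(w\) next to \(S\) on a \emph{moving} path; the core of P\'osa's technique is the invariance analysis showing that this adjacency transfers back to the fixed reference path \(P\), so that the charging is genuinely at most two-to-one. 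I would treat the degenerate cases separately --- when \(w\) coincides with the fixed endpoint \(v_\ell\), when \(w \in S\) itself, or when \(w\) is an endpoint of \(P\) --- none of which affect the final count; for the full details of the invariance one may follow the proof in~\cite{bonamy2023separating}.
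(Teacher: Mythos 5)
You should first note that the paper itself offers no proof of Lemma~\ref{lemma:brandt}: it is quoted from~\cite{brandt2006global}, and the text merely points to~\cite{bonamy2023separating} for a proof. Your sketch follows the standard P\'osa rotation--extension route, and everything you actually argue is correct: the set \(S\) of free endpoints obtainable by rotations fixing \(v_\ell\) satisfies \(N(S)\subseteq V(P)\) for exactly the reason you give (every rotated path is again a longest path on the vertex set \(V(P)\)), and \(N(S)\subseteq S^-\cup S^+\), taken with respect to the fixed path \(P\), does yield \(|N(S)|\le 2|S|\) by injectivity of the two shift maps. The problem is that you have left unproven precisely the claim that carries all of the content --- the transfer of the adjacency ``\(w\) is a path-neighbour of a vertex of \(S\)'' from the moving, rotated path back to the fixed path \(P\) --- and deferred it to the same reference the paper cites. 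You identify the right obstacle, but as written your text is an outline with a citation standing where the heart of the lemma should be, so as a self-contained proof it has a genuine gap at that step.

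The missing invariance is short, and with it your argument closes. Claim: if \(w\notin S\cup S^-\cup S^+\) (all with respect to \(P\)), then in every rotated path \(Q\) the vertex \(w\) is incident to exactly its \(P\)-edges. Induct on the number of rotations: a rotation at the free endpoint \(u\) along an edge \(uq_i\) deletes only the path-edge \(q_{i-1}q_i\), whose endpoint \(q_{i-1}\) becomes the new free endpoint and hence lies in \(S\), and adds only the edge \(uq_i\) with \(u\in S\); so the only vertices whose path-edges change are \(u\) and \(q_{i-1}\), both in \(S\), and \(q_i\) --- and \(w=q_i\) is impossible, since by the induction hypothesis its \(Q\)-neighbour \(q_{i-1}\in S\) would be a \(P\)-neighbour of \(w\), forcing \(w\in S^-\cup S^+\). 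Granting the claim, suppose \(w\in N(S)\setminus(S^-\cup S^+)\); note \(w\notin S\) by the paper's definition of \(N\). Pick \(u\in S\) with \(uw\in E(G)\) and a witness path \(Q\), and let \(z\) be the \(Q\)-neighbour of \(w\) on the \(u\)-side. If \(z=u\), the claim makes \(u\) a \(P\)-neighbour of \(w\), a contradiction; otherwise rotating \(Q\) along \(uw\) deletes the edge \(zw\) and makes \(z\) a free endpoint, so \(z\in S\), while the claim makes \(z\) a \(P\)-neighbour of \(w\) --- again \(w\in S^-\cup S^+\), a contradiction. This single induction also absorbs your flagged degenerate cases (\(w=v_\ell\), or \(w\) the successor of \(u\)), so adding this paragraph would make your proposal a complete proof, and indeed more detailed than the paper's own treatment of the lemma.
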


Now we can prove Theorem~\ref{thm:main}.

\begin{proof}[Proof of Theorem~\ref{thm:main}]
    The proof follows by induction on \(k\). 
    If \(k = 2\) the results follows from Theorem~\ref{thm:main-bipartite}.
    Thus we may assume \(k\geq 3\),
    and that the statement holds for \(k' < k\).
    In this proof, we use the induction hypothesis either
    with \(m_1,\ldots,m_{k-1}\) or with \(m_2,\ldots,m_k\).
    In either case we have \(\varepsilon \cdot m_i \geq 2m_{i-1}^2\) 
    for every \(i\) in the considered interval.
    
    Fix a red--blue coloring of \(K_N\),
    and let \(G\) be the graph induced by its red edges.
    Let \(P\) be a longest path in \(G\).
    If \(|P| \geq n\), then the statement follows.
    Thus, we may assume that \(|P|\leq n-1\). 
    We first deal with the case \(P\) is small.
    
    Suppose that \(|P| \leq n - m_2 - 2m_1 - 1\).
    By Lemma~\ref{lemma:brandt} there is a set \(S\subseteq V(P)\) such that
    \(N(S) \subseteq V(P)\) and \(|N(S)| \leq 2|S|\).
    Thus, \(N - |S\cup N(S)| \geq N - |P| \geq (n-1)(k-2) + m_2\).
    By induction hypothesis either \(P_n\subseteq G\setminus \big(S\cup N(S)\big)\) 
    or \(K_{m_2,\ldots,m_k} \subseteq \overline{G \setminus \big(S\cup N(S)\big)}\).
    In the former case we have \(P_n \subseteq G\) as desired.
    Thus, we may assume that there is a copy \(K'\) of \(K_{m_2,\ldots,m_k}\) in \(\overline{G \setminus \big(S\cup N(S)\big)}\).
    If \(|S| \geq m_1\),
    then \(S\) together with \(V(K')\) induces a copy of \(K_{m_1,\ldots,m_k}\) in \(\overline{G}\), as desired.
    Therefore, we may assume that \(|S| \leq m_1 - 1\).
    Now, let \(A\) be a maximum set in \(V(G)\) such that \(|A| \leq m_1 - 1\)
    and \(|N(A)| \leq 2|A|\).
    Let \(G' = G \setminus \big(A\cup N(A)\big)\) and set \(N' = |V(G')| \geq N - 3m_1\).
    Note that for every set \(A'\subseteq V(G')\) with \(|N_{G'}(A')| \leq 2 |A'|\) we have  \(|N_G(A\cup A')| = |N_G(A)| + |N_{G'}(A')| \leq 2\big(|A| + |A'|\big)\),
    and hence, by the maximality of \(A\), we have \(|A| + |A'| \geq m_1\).
    
    Now, let \(P'\subseteq G'\) be a longest path.
    Again, by Lemma~\ref{lemma:brandt} there is a set \(S'\subseteq V(P')\) such that
    \(N_{G'}(S') \subseteq V(P')\) and \(|N_{G'}(S')| \leq 2|S'|\).
    Since \(|N_{G'}(S')| \leq 2|S'|\), we have \(|A| + |S'|  \geq m_1\).
    Naturally, we have \(|P'| \leq |P| \leq  n - m_2 - 2m_1 - 1\),
    and since \(S' \cup N_{G'}(S') \subseteq V(P')\),
    we have 
    \(N' - |S'\cup N_{G'}(S')| \geq N - 3m_1 - |P| \geq (n-1)(k-2) + m_2\).
    By induction hypothesis either \(P_n\subseteq G'\setminus \big(S'\cup N_{G'}(S')\big)\) 
    or \(K_{m_2,\ldots,m_k} \subseteq \overline{G' \setminus \big(S'\cup N_{G'}(S')\big)}\).
    In the former case we have \(P_n \subseteq G\) as desired.
    Thus, we may assume that there is a copy \(K'\) of \(K_{m_2,\ldots,m_k}\) in \(\overline{G' \setminus \big(S'\cup N_{G'}(S')\big)}\).
    Since no vertex in \(A\cup S'\) is adjacent to a vertex of \(G'\setminus \big(S'\cup N_{G'}(S')\big)\)
    and \(|A\cup S'| = |A| + |S'| \geq m_1\),
    \(A\cup S' \cup V(K')\) induces a copy of \(K_{m_1,\ldots,m_k}\) in \(\overline{G}\), as desired. 
    
    Therefore, we may assume that \(n-1 \geq |P| \geq n - m_2 - 2m_1\).
    Now, consider  \(G'=G\setminus V(P)\).
    Since  \(|G'|=N-|P|\geq (n-1)(k-2)+m_1\), 
    by the induction hypothesis, 
    either \(P_n \subseteq G'\) or \(K_{m_1,\ldots,m_{k-1}} \subseteq \overline{G'}\).
    In the former case, we have \(P_n\subseteq G\), as desired.
    Thus, we may assume that there is a copy \(K'\) of \(K_{m_1,\ldots,m_{k-1}}\) in \(\overline{G'}\). 
    Let \(K^*\) be the union of the components of \(G'\) that contain vertices of \(K'\).
    Since \(K^*\subseteq G' = G\setminus V(P)\), we have
    \begin{equation}\label{eq:gen.2}
        |K^*|+|P|\leq N.
    \end{equation}
    Moreover, if \(N-|K^*|-|N(K^*)|\geq m_k\), 
    then there is a copy of \(K_{m_1,\ldots,m_{k}}\) 
    in \(\overline{G}\) obtained from \(K'\) 
    by adding \(m_k\) vertices of \(V(G)\setminus \big(K^*\cup N(K^*)\big)\).
    Therefore, we may assume that 
    \begin{equation}\label{eq:gen.3}
        N-|K^*|-|N(K^*)|\leq m_k-1.
    \end{equation}

    Now, we use a simple induction to prove that for each \(i \geq 2\) we have \(m_i \geq 2(m_1 + \cdots + m_{i-1})\).
    Indeed, this holds for \(m_2\) since \(\varepsilon\cdot m_2 \geq 2m_1^2\geq 2m_1\).
    Now, since \(m_{i-1}\geq 2m_{i-2}^2 \geq 2m_{i-2} \geq 2\), 
    if \(m_{i-1} \geq 2(m_1 + \cdots + m_{i-2})\),
    we have \(\varepsilon \cdot m_i\geq 2m_{i-1}^2 \geq 2m_{i-1} + m_{i-1} \geq 2m_{i-1} + 2(m_1 + \cdots + m_{i-2}) = 2(m_1 + \cdots + m_{i-1})\),
    as desired.
    Therefore, we have \(m_i^2 \geq 2 m_i(m_1 + \cdots + m_{i-1})\).
    Summing over \(i \geq 2\), we have
    \begin{align}
        \varepsilon(m_1 + \cdots + m_k) 
                &= \sum_{i=1}^k \varepsilon \cdot m_i
        \geq \sum_{i=0}^{k-1} 2m_i^2 \nonumber\\
        & = \sum_{i=0}^{k-1} m_i^2 + \sum_{i=0}^{k-1}2m_i(m_1 + \cdots + m_{i-1})
        = (m_1 + \cdots + m_{k-1})^2 \label{eq:eps+sum}.
    \end{align}

    Summing~\eqref{eq:gen.2} and~\eqref{eq:gen.3},
    we obtain
    \begin{align*}
        |N(K^*)| 
                \geq |P| - m_k + 1
            &   \geq n - m_2 - 2m_1 - m_k + 1 \\
            &   \geq m_k + (m_1+\cdots+m_{k-1})(m_1+\cdots+m_{k-1}-1),
    \end{align*}
    where the last inequality follows by~\eqref{eq:eps+sum} because \(n\geq (2+\varepsilon)(m_1+\cdots + m_k)\).
    
    Finally, by Lemma~\ref{lemma:abab}
    given two distinct vertices \(u\) and \(v\) in \(K^*\),
    \(N_P(u)^-\) has at most one vertex in \(N(v)\).
    Consequently, \(N_P(u)^-\) has at most \(m_1+\cdots+m_{k-1}-1\) vertices of \(\cup_{v\in V(K^*)\setminus\{u\}} N(v)\).
    Thus, \(N_P(K^*)^- = \cup_{u\in V(K^*)}N_P(u)^-\) has at most \((m_1+\cdots+m_{k-1})(m_1+\cdots+m_{k-1}-1)\) neighbors of \(K^*\).
    Therefore,  there is a set \(N^* \subseteq N_P(K^*)^-\setminus N_P(K^*)\)
    with \(m_k\) vertices,
    and hence \(N^* \cup V(K^*)\) induces a copy of \(K_{m_1,\ldots,m_{k}}\) in \(\overline{G}\), as desired.
\end{proof}

\section{Concluding remarks}

In this paper we present a family of graphs \(H\) for which the family of
\(H\)-good paths is almost as large as possible.
We observe that the unbalance condition \(\varepsilon\cdot m_i \geq 2m_{i-1}^2\)
could be replaced by the slightly weaker condition \(\varepsilon (m_1 + \cdots + m_i) \geq (m_1 + \cdots + m_{i-1})(m_1 + \cdots + m_{i-1} - 1)\),
but this would require a longer checking on the induction hypothesis conditions,
while keeping a quadratic inequality.
Nevertheless, we believe that the results presented in Section~\ref{sec:auxiliary} could be deepened in order to improve the unbalance condition, perhaps to a subquadratic inequality.
For example, it's not hard to see the relation between Lemma~\ref{lemma:blue-clique} and Lemma~\ref{lemma:abab}. 
This connection suggests the existence of a more general result that considers a larger number of components of \(G\setminus V(P)\).

\bibliographystyle{amsplain}
\providecommand{\bysame}{\leavevmode\hbox to3em{\hrulefill}\thinspace}
\providecommand{\MR}{\relax\ifhmode\unskip\space\fi MR }
\providecommand{\MRhref}[2]{%
  \href{http://www.ams.org/mathscinet-getitem?mr=#1}{#2}
}
\providecommand{\href}[2]{#2}

\end{document}